\newtheorem{theorem}{Theorem}
\theoremstyle{definition}
\newtheorem{example}{Example}
\newcommand{\R}{\mathbb{R}}
\newcommand{\fK}{\mathfrak{K}}
\newcommand{\marg}{\vartriangleleft}
\begin{document}

\title{Multivariate Measures of Concordance for Copulas and their Marginals}         
 
\author{M. D. Taylor}        

\date{\today}          
 
\maketitle

\begin{abstract}
Building upon earlier work in which axioms were formulated for multivariate measures of concordance, we examine properties of such measures.  In particular, we examine the relations between the measure of concordance of an $n$-copula and the measures of concordance of the copula's marginals.
\end{abstract}

\bigskip

2000 {\it Mathematics Subject Classification.}  Primary 62H20  Secondary 62H05, 60E05   

{\it Key words and phrases.}  Multivariate measure of concordance, measure of association, copula.

\allowdisplaybreaks

\section{Introduction}

Bivariate measures of concordance for an ordered pair of continuous random variables $(X_1,X_2)$ were defined axiomatically by Scarsini in \cite{Scarsini84}.  He showed that such a measure of concordance was a function $\kappa(C)$ of the 2-copula $C$ of $(X_1,X_2)$ and that a number of familiar measures of dependence such as Spearman's rho, Kendall's tau, Gini's coefficient, and Blomqvist's beta were examples of measures of concordance.  Some investigations of bivariate measures of concordance in the spirit of Scarsini's axioms are \cite{Edmiktay03a}, \cite{Edmiktay03b}, \cite{Edwards04}, and \cite{edwards_2008}.  Symmetries of the unit square, $I^2$, played an important role in these investigations.

The idea of a measure of concordance for an ordered $n$-tuple $(X_1,\ldots,X_n)$ of continuous random variables naturally suggested itself.  Examples of generalizations of familiar measures of concordance to a multivariate case can be found in \cite{Joe97}, \cite{nelsen02}, and \cite{Ubeda04}.  Two very similar sets of axioms for multivariate measures of concordance were formulated in \cite{dolati_2006a} and \cite{taylor06}.  We follow here the formulation of \cite{taylor06}, though it is likely that most of the conclusions exhibited hold in both formulations.  The axioms can be formulated in terms of the $n$-copula $C$ of the $n$-tuple $(X_1,\ldots,X_n)$, and symmetries of $I^n$ play an important role.

Some of the consequences of the axioms of \cite{taylor06} were exhibited in \cite{taylor_2008}, though the presentation was less than easily accessible.  This work is devoted mostly to presenting the results \cite{taylor_2008} in a briefer and more readily understandable form.  The most interesting result is that the measure of concordance of an odd-dimensional copula is expressible in terms of the measures of concordance of its even-dimensional marginals.

We are very grateful for the comments and help of our colleagues P. Mikusi\'nski and M. \'Ubeda-Flores in preparing this work.

\section{Copulas and symmetries}

We take $I$ to be the closed unit interval $[0,1]$ and $I^n$ to be the unit $n$-dimensional cube $I \times \cdots \times I$. 

 We will say that a probability measure $\mu$ on the Borel sets of $I^n$ has \emph{uniform one-dimensional marginals} if the following condition holds:  Whenever we have a set of the form $A_1 \times \cdots \times A_n$ and every $A_i$ is $I$ except for possibly some one $A_j$, then
\[
	\mu(A_1 \times \cdots \times A_n) = \lambda(A_j)
\]
where $\lambda$ is one-dimensional Lebesgue measure.

By an $n$-copula, where $n \geq 2$, we mean a function $C : I^n \to I$ associated with a probability measure $\mu$ on $I^n$ with uniform one-dimensional marginals by the equation
\begin{equation} \label{cop_mu}
	C(x_1, \ldots, x_n) = \mu([0,x_1] \times \cdots \times [0,x_n])
\end{equation}
for all $x = (x_1, \ldots, x_n) \in I^n$.  There is a one-to-one correspondence $C \leftrightarrow \mu$ between $n$-copulas and such measures on $I^n$.  

We denote the set of $n$-copulas as Cop($n$).

Here is the connection between copulas and random variables:  If $X_1, \ldots, X_n$ are continuous random variables on a common probability space with respective distribution functions $F_1, \ldots, F_n$, then there is a unique $n$-copula $C$ such that
\[
	F(x_1, \ldots, x_n) = C(F_1(x_1), \ldots, F_n(x_n))
\]
where $x_1, \ldots, x_n \in \R$ and $F$ is the joint distribution function for $(X_1, \ldots, X_n)$.  It turns out that we may almost always assume that each $X_k$ is uniformly distributed over $I$, that our probability space is $I^n$, and we may then identify each $X_k$ with the projection map $X_k: I^n \to I$ defined by $X_k(x_1,\ldots,x_n) = x_k$. 

Two particularly significant $n$-copulas are 
\[
	\Pi^n(x_1, \ldots, x_n) = x_1 \cdots x_n
\]
and
\[
	M^n(x_1, \ldots, x_n) = \min(x_1, \ldots, x_n).
\]
$\Pi^n$ is the copula for $(X_1,\ldots,X_n)$ when the random variables are independent, and $M^n$ is the appropriate copula when each $X_i$ is almost surely a monotone increasing function of every other $X_j$.  See \cite{nelsen_text} and \cite{schweizsklar83} for more information on copulas.

We consider the concept of \emph{marginals} of a copula:  Suppose, for example, that we have a 5-copula $C(x_1,x_2,x_3,x_4,x_5)$.  If we define
\[
	C_2(x_1,x_3,x_4,x_5) = C(x_1,1,x_3,x_4,x_5)
\]
and 
\[
	C_{13}(x_2,x_4,x_5) = C(1,x_2,1,x_4,x_5),
\]
then $C_2$ and $C_{13}$ are marginals of $C$, and it can be seen that $C_2$ is a 4-copula and $C_{13}$ is a 3-copula.  In general, if $C$ is an $n$-copula, then $C_{i_1, \ldots, i_k}$ is the marginal of $C$ obtained by replacing $x_{i_1}, \ldots, x_{i_k}$ with 1.  This marginal will be an $(n-k)$-copula provided $n-k \geq 2$.

If $A$ is a $k$-copula that is a marginal of the copula $C$, we indicate this by writing $A \marg_k C$.

(\textbf{Remark.}  Our notation for marginals is the reverse of what is usually used.  In our example of the marginal $C_2$ of the 5-copula $C$, other authors would tend to use the notation $C_{1345}$ for the marginal.)

By a \emph{symmetry of $I^n$} we understand a one-to-one, onto map  $\phi:I^n \rightarrow I^n$ of the form  
\[ 
  \phi(x_1,\cdots,x_n) = (u_1,\cdots,u_n)  
\] 
where for each $i$  
\[ 
  u_i =  
  \begin{cases} 
    x_{k_i} \text{ or} \\ 
    1-x_{k_i} 
  \end{cases} 
\] 
and where $(k_1,\cdots,k_n)$ is a permutation of $(1,\cdots,n)$. We say that $\phi$ is a \emph{permutation} if for each $i$ we have  $u_i = x_{k_i}$ and is a \emph{reflection} if for each $i$ we have  $u_i = x_i$ or $1-x_i$. 

We define the \emph{elementary reflections}  $\sigma_1, \sigma_2, \cdots, \sigma_n$ by  
\[ 
  \sigma_i(x_1,\cdots,x_n) = (u_1,\cdots,u_n) \text{ where } u_j =  
  \begin{cases} 
    1-x_j \text{ if }j=i \\ 
    x_j \text{ otherwise.} 
  \end{cases} 
\] 
Notice that $\sigma_i \sigma_j = \sigma_j \sigma_i$, that is, the group of reflections is abelian.  By $\sigma^n$ we mean the reflection $\sigma_1\sigma_2\cdots\sigma_n$;  
that is, 
\[ 
  \sigma^n(x_1,\cdots,x_n) = (1-x_1,\cdots,1-x_n). 
\] 
If the choice of $n$ is clear, we shall write $\sigma$ for $\sigma^n$.  

Every symmetry $\xi$ of $I^n$ can be written uniquely in the form $\tau \circ \sigma_{i_1} \circ \cdots \circ \sigma_{i_k}$ (or $\sigma_{j_1} \circ \cdots \circ \sigma_{j_k} \circ \tau'$) where $\tau$ (or $\tau'$) is a permutation and $i_1 < \cdots < i_k$ (or $j_1 < \cdots < j_k$).  We define
\[
	k = | \xi | = \textit{the length of }\xi.
\]

Suppose $C$ is an $n$-copula, $\mu$ is the probability measure associated with $C$ via Equation (\ref{cop_mu}), and $\xi : I^n \to I^n$ is a symmetry of $I^n$.  Then $\xi$ operates on $C$ to produce a new $n$-copula $\xi^*C$ defined by the equation
\begin{equation} \label{symm_cop}
	(\xi^*C)(x) = \mu(\xi([0,x]))
\end{equation}
where $[0,x]$ is the $n$-dimensional rectangle $[0,x_1] \times \cdots \times [0,x_n]$ determined by $x = (x_1, \ldots, x_n) \in I^n$.  It is easily seen that if $\xi$ and $\eta$ are symmetries of $I^n$, then $(\xi \eta)^* = \eta^* \xi^*$.  See \cite{taylor06} and \cite{taylor_2008} for a detailed discussion.

Suppose $C$ is the $n$-copula associated with the $n$-tuple of random variables $(X_1, \ldots, X_n)$.  Then it can be seen that the $n$-copula associated with $(-X_1, X_2, \ldots, X_n)$ is $\sigma_1^*C$.  If each $X_k$ is uniformly distributed over $I$, we can rephrase this idea thus:
\[
	\sigma_1^*C(x_1,x_2, \ldots, x_n) = P(1-X_1 < x_1, X_2 < x_2, \ldots, X_n < x_n).
\]
That is, $\sigma_1^*C$ is the $n$-copula for $(1-X_1,X_2,\ldots,X_n)$.  Similar statements can be made about $\sigma_{i_1}^* \cdots \sigma_{i_k}^* C$.

If $C$ is the $n$-copula associated with the $n$-tuple $(X_1, \ldots, X_n)$, where each $X_k$ is uniformly distributed over $I$, then we define the \emph{survival function} $\overline{C}:I^n \to I$ by
\[
	\overline{C}(x_1, \ldots, x_n) = P(X_1 > x_1, \ldots, X_n > x_n).
\]
Next, given two $n$-copulas $A$ and $B$, we say $B$ is \emph{more concordant} than $A$ and write $A \prec B$ if 
\[
	A \leq B \text{ and } \overline{A} \leq \overline{B}.
\]
It is easily seen that $\overline{A} \leq \overline{B}$ if and only if $\sigma^* A \leq \sigma^* B$.  If $A$ and $B$ are associated with $n$-tuples of random variables, $(X_1, \ldots, X_n)$ and $(Y_1, \ldots, Y_n)$ respectively and $A \prec B$, then we also write $(X_1, \ldots, X_n) \prec (Y_1, \ldots, Y_n)$.

\section{Multivariate measures of concordance}

\subsection{Axioms}

We give a first formulation of the axioms in a way which emphasizes the role of the random variables involved.  This may make their significance a bit clearer.  

By a \emph{measure of concordance} $\kappa$ we mean a function 
that attaches to every $n$-tuple of continuous random variables
$(X_1,\cdots,X_n)$ defined on a common probability space, where $n{\geq}2$,  
a real  number $\kappa(X_1,\cdots,X_n)$ satisfying the following:
\newcounter{naxiom}
\begin{list}{\bfseries A\arabic{naxiom}.}{\usecounter{naxiom}}
  \item  \textbf{(Normalization)} \quad
    $\kappa(X_1,\cdots,X_n)=1$ if each $X_i$ is a.s. an increasing a function 
      of every other $X_j$, and $\kappa(X_1,\cdots,X_n)=0$ if 
      $X_1,\cdots,X_n$ are independent.
  \item  \textbf{(Monotonicity)} \quad
    If $(X_1,\cdots,X_n) \prec (Y_1,\cdots,Y_n)$, then 
        $\kappa(X_1,\cdots,X_n) \leq \kappa(Y_1,\cdots,Y_n)$.
  \item  \textbf{(Continuity)} \quad
    If $F_k$ is the joint distribution function of the random vector 
      $(X_{k1},\cdots,X_{kn})$ and $F$ is the distribution function for 
      $(X_1,\cdots,X_n)$ and $F_k{\rightarrow}F$, then 
      $\kappa(X_{k1},\cdots,X_{kn}) \rightarrow \kappa(X_1,\cdots,X_n)$.
  \item  \textbf{(Permutation Invariance)} \quad
    If $(i_1,\cdots,i_n)$ is a permutation of $(1,\cdots,n)$, then 
      $\kappa(X_{i_1},\cdots,X_{i_n}) = \kappa(X_1,\cdots,X_n)$.
  \item  \textbf{(Duality)} \quad
    $\kappa(-X_1,\cdots,-X_n)=\kappa(X_1,\cdots,X_n)$.
  \item  \textbf{(Reflection Symmetry Property; RSP)} \;
    $\underset{\epsilon_1,\cdots,\epsilon_n}{\sum}
      \kappa(\epsilon_1{X_1},\cdots,\epsilon_n{X_n})=0$ where each 
      $\epsilon_i=\pm{1}$ and the sum is over all possible combinations of $\pm{1}$.
  \item  \textbf{(Transition Property; TP)} \quad
    There exists a sequence of numbers $\{r_n\}$, where $n{\geq}2$, such 
      that for every $n$-tuple of continuous random variables 
      $(X_1,\cdots,X_n)$, we have 
      \[
        r_{n-1} \kappa(X_2,\cdots,X_n) = \kappa(X_1,X_2,\cdots,X_n) + 
	  \kappa(-X_1,X_2,\cdots,X_n).
      \] 
\end{list}

\bigskip

We next restate our axioms in terms of copulas and symmetries of $I^n$.  We say that $\kappa = \{\kappa_n\}$ is a \emph{measure of concordance} if each $\kappa_n$, $n \geq 2$, is a map $\kappa_n : \text{Cop}(n) \to \R$ such that the following holds:

\newcounter{axiom} 
\begin{list}{\bfseries A\arabic{axiom}.}{\usecounter{axiom}} 
  \item  \textbf{(Normalization)} \quad 
    $\kappa_n(M^n)=1$ and $\kappa_n(\Pi^n)=0$. 
  \item  \textbf{(Monotonicity)} \quad 
    If $A \prec B$, then $\kappa_n(A) \leq  
    \kappa_n(B)$. 
  \item  \textbf{(Continuity)} \quad 
    If $C_m \rightarrow C$ uniformly, then  
    $\kappa_n(C_m)\rightarrow \kappa_n(C)$ as $m\rightarrow\infty$. 
  \item  \textbf{(Permutation Invariance)} \quad 
    $\kappa_n(\tau^* C) = \kappa_n(C)$ whenever $\tau$ is a permutation. 
  \item  \textbf{(Duality)} \quad 
    $\kappa_n(\sigma_1^* \cdots \sigma_n^* C)=\kappa_n(C)$. 
  \item  \textbf{(Reflection Symmetry Property; RSP)} \; 
    $\underset{\rho \in R_n}{\sum}\kappa_n(\rho^* C)=0$ where $R_n$ is the group of all reflections of $I^n$. 
  \item  \textbf{(Transition Property; TP)} \quad 
    $r_n \kappa_n(C) = \kappa_{n+1}(E) + \kappa_{n+1}(\sigma_1^* E)$  
    whenever $E$ is an $(n+1)$-copula such that  
    $C(x_1,\cdots,x_n)= E(1,x_1,\cdots,x_n)$. 
\end{list}

\subsection{Examples}

We give some examples of multivariate measures of concordance from \cite{nelsen02} and \cite{taylor06}.  Each of these is a generalization of a well-known bivariate measure of concordance, and the name for each bivariate case has simply been lifted up to the multivariate setting.

Spearman's rho, Gini's coefficient, and Blomqvist's beta all have the form 
\[
	\kappa_n(C) = \alpha_n \; \left( \int_{I^n} (C + \sigma^* C ) \, d\mu_n - \frac{1}{2^{n-1}} \right),
\]
where $\mu_n$ is a probability measure on $I^n$.  Kendall's tau has the form 
\[
	\tau_n(C) = \alpha_n \; \left( \int_{I^n} C \, dC - \frac{1}{2^{n}} \right).
\]
The values of $\alpha_n$ and $r_n$ are displayed in Table \ref{moc_examples}.
\begin{table}
\begin{center}
\begin{tabular}{ l c c }
  \hline
  Measure of concordance	& $\alpha_n$	& $r_n$ \\ \hline
  Spearman's rho	& $\tfrac{(n+1) \, 2^{n-1}}{2^n - (n+1)}$	& $2  \left( \tfrac{n+2}{n+1} \right)  \left( \tfrac{2^n - (n+1)}{2^{n+1} -(n+2)} \right)$ \\
  Gini's coefficient	& $\tfrac{2^n}{2^{n-1}-1}$	& $2  \left( \tfrac{2^{n-1}-1}{2^n -1} \right)$ \\
  Blomqvist's beta	& $\tfrac{2^{n-1}}{2^{n-1}-1}$	& $2  \left( \tfrac{2^{n-1}-1}{2^n -1} \right)$ \\
  Kendall's tau		& $\tfrac{2^n}{2^{n-1}-1}$	& $2  \left( \tfrac{2^{n-1}-1}{2^n -1} \right)$ \\
\end{tabular}
\caption{Parameters for measures of concordance} \label{moc_examples}
\end{center}
\end{table}
For Spearman's rho, $\mu_n = \lambda^n$, $n$-dimensional Lebesgue measure.  For Gini's coefficient, $\mu_n$ is determined by uniformly distributing a mass of $1 / 2^{n-1}$ along each of the one-dimensional diagonals of $I^n$.  A one-dimensional diagonal $D$ is a line segment running from one vertex of $I^n$ to the opposite vertex; as, for example, the diagonal in $I^5$ from $(0,1,0,1,1)$ to $(1,0,1,0,0)$.  For Blomqvist's beta, $\mu_n$ is a unit mass at the point $( \tfrac{1}{2}, \ldots, \tfrac{1}{2})$.

\section{Simple properties}

Here are some properties of measures of concordance from \cite{taylor06}.  Recall that $C_i$ is the marginal of $C$ obtained by setting the $i$th variable of $C$ to $1$.

\begin{theorem} 
  For every measure of concordance $\kappa = \{\kappa_n\}$,  
  the following is true: 
  \begin{list}{}{} 
    \item  (a)  If $C$ is an $n$-copulas  (where $n{\geq}3$) then $r_{n-1}\kappa_{n-1}(C_i) = \kappa_n(C)+\kappa_n(\sigma_i^*C)$ for $i = 1, \ldots, n$. 
    \item  (b)  $r_{n-1} = 1+\kappa_n(\sigma_i^* M^n)$ for $i = 1, \ldots, n$.
    \item  (c)  $r_2=\frac{2}{3}$ and $\kappa_3(\sigma_i^* M^3) = \kappa_3(\sigma_i^* \sigma_j^* M^3) =-\frac{1}{3}$ for $i,j = 1,2,3$ and $i \ne j$. 
     \end{list} 
\end{theorem}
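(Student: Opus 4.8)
The plan is to derive (a) directly from the Transition Property axiom together with permutation invariance, then obtain (b) by specializing (a) to $M^n$, and finally pin down the constants in (c) by combining (b) with the Reflection Symmetry Property and Duality at dimension $3$.

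For part (a), the case $i=1$ is essentially axiom A7 read with $n$ replaced by $n-1$: taking the $(n+1)$-copula of the axiom to be our $n$-copula $C$, the hypothesis $C(x_1,\ldots,x_n)=E(1,x_1,\ldots,x_n)$ identifies the relevant marginal with $C_1$, so $r_{n-1}\kappa_{n-1}(C_1)=\kappa_n(C)+\kappa_n(\sigma_1^*C)$. The restriction $n\geq 3$ is exactly what makes $C_1$ an $(n-1)$-copula, i.e. $n-1\geq 2$. To pass to arbitrary $i$, I would let $\tau$ be the transposition of coordinates $1$ and $i$ and apply the $i=1$ identity to $\tau^*C$. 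Permutation invariance (A4) gives $\kappa_n(\tau^*C)=\kappa_n(C)$ and $\kappa_{n-1}((\tau^*C)_1)=\kappa_{n-1}(C_i)$, since setting the first variable of $\tau^*C$ to $1$ reproduces $C_i$ up to a permutation of the remaining coordinates. For the last term I would use the conjugation relation $\tau\sigma_1=\sigma_i\tau$ (valid because $\tau(1)=i$, as one checks directly on points) together with the contravariance $(\xi\eta)^*=\eta^*\xi^*$ to write $\sigma_1^*\tau^*C=(\tau\sigma_1)^*C=(\sigma_i\tau)^*C=\tau^*\sigma_i^*C$, whence $\kappa_n(\sigma_1^*\tau^*C)=\kappa_n(\sigma_i^*C)$ again by A4. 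Substituting these three identities yields (a).

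Part (b) is then immediate: apply (a) to $C=M^n$. Since $M^n(x_1,\ldots,1,\ldots,x_n)=M^{n-1}$, the marginal $(M^n)_i$ equals $M^{n-1}$, so normalization (A1) gives $\kappa_{n-1}((M^n)_i)=1$ and $\kappa_n(M^n)=1$, and the identity of (a) collapses to $r_{n-1}=1+\kappa_n(\sigma_i^*M^n)$.

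For part (c) I would work at $n=3$ and expand the RSP sum $\sum_{\rho\in R_3}\kappa_3(\rho^*M^3)=0$ over the eight reflections. The identity and the full reflection $\sigma=\sigma_1\sigma_2\sigma_3$ each contribute $1$: the former by A1, the latter by Duality (A5), which gives $\kappa_3(\sigma^*M^3)=\kappa_3(M^3)=1$. Writing $a=\kappa_3(\sigma_i^*M^3)$ and $b=\kappa_3(\sigma_i^*\sigma_j^*M^3)$, each well defined independently of the indices by permutation invariance, the crucial observation is that A5 forces $a=b$: using $\sigma\sigma_k=\sigma_i\sigma_j$ for the complementary index $k$ and contravariance, one has $(\sigma_i\sigma_j)^*M^3=\sigma^*\sigma_k^*M^3$, and Duality then gives $\kappa_3(\sigma^*\sigma_k^*M^3)=\kappa_3(\sigma_k^*M^3)$, i.e. $b=a$. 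The RSP sum therefore reads $1+3a+3b+1=0$, so $a=b=-\tfrac13$. Finally $r_2=1+\kappa_3(\sigma_i^*M^3)=1-\tfrac13=\tfrac23$ by part (b) with $n=3$. The one step needing care throughout is the bookkeeping of how permutations conjugate the elementary reflections and how the contravariant $*$-action reorders composites; once those are handled correctly, every computation is mechanical.
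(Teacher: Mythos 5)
Your argument is correct, and all three parts are established exactly as the axioms intend: (a) is the Transition Property transported to the $i$th coordinate via the conjugation $\tau\sigma_1=\sigma_i\tau$ and the contravariance $(\xi\eta)^*=\eta^*\xi^*$, (b) follows by specializing to $M^n$ with $(M^n)_i=M^{n-1}$, and (c) follows from RSP over the eight reflections of $I^3$ together with Duality forcing $\kappa_3(\sigma_i^*M^3)=\kappa_3(\sigma_i^*\sigma_j^*M^3)$. Note that the paper itself states this theorem without proof, citing Taylor (2007), so there is no in-paper argument to compare against; your derivation is the standard one and contains no gaps.
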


\begin{theorem} 
  For all $n{\geq}2$ and all symmetries  of $I^n$ $\rho$ and $\xi$ such that $|\rho|=|\xi|$   or $|\rho|+|\xi|=n$, we have $\kappa_n(\rho^* M^n)=\kappa_n(\xi^* M^n)$. 
\end{theorem}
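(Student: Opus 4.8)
The plan is to reduce everything to the action of \emph{reflections alone} on $M^n$, to exploit the fact that $M^n$ is invariant under permutations, and then to handle the two cases as (i) an orbit argument under conjugation by permutations and (ii) a single application of Duality.

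First I would record the basic reduction. Writing an arbitrary symmetry in the normal form $\xi = \tau \circ \sigma_{i_1} \circ \cdots \circ \sigma_{i_k}$ with $\tau$ a permutation and $k = |\xi|$, the contravariance rule $(\xi\eta)^* = \eta^*\xi^*$ together with commutativity of the reflections gives $\xi^* = \sigma_{i_1}^* \cdots \sigma_{i_k}^* \tau^*$. Since the mass of $M^n$ sits uniformly on the main diagonal and that diagonal is fixed pointwise by every coordinate permutation, one checks directly from (\ref{symm_cop}) that $\tau^* M^n = M^n$. Hence $\xi^* M^n = \sigma_S^* M^n$, where $S = \{i_1, \ldots, i_k\}$ satisfies $|S| = |\xi|$ and I write $\sigma_S^* = \prod_{i \in S} \sigma_i^*$. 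Thus $\kappa_n(\xi^* M^n)$ depends on $\xi$ only through the set $S$ of reflected coordinates.

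For the case $|\rho| = |\xi|$, I would show that $\kappa_n(\sigma_S^* M^n)$ in fact depends only on the cardinality $|S|$. Given two index sets $S, T$ with $|S| = |T|$, choose a permutation $\pi$ with associated symmetry $\tau$ carrying $S$ to $T$. The key computation is the conjugation identity $\tau \sigma_S \tau^{-1} = \sigma_T$, which reduces to the one-coordinate statement $\tau \sigma_i \tau^{-1} = \sigma_{\pi^{-1}(i)}$. Translating through the $*$-operation and using $\tau^* M^n = M^n$ yields $\sigma_T^* M^n = (\tau^{-1})^* \sigma_S^* M^n$, and Permutation Invariance (A4) then gives $\kappa_n(\sigma_T^* M^n) = \kappa_n(\sigma_S^* M^n)$. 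Combined with the first paragraph, this settles $\kappa_n(\rho^* M^n) = \kappa_n(\xi^* M^n)$ whenever $|\rho| = |\xi|$. For the case $|\rho| + |\xi| = n$, it then suffices to compare $\kappa_n(\sigma_S^* M^n)$ with $\kappa_n(\sigma_{S^c}^* M^n)$ for a single set $S$ with $|S| = |\rho|$ and $|S^c| = n - |\rho| = |\xi|$. Here Duality (A5) does the work: since $\sigma^* = \sigma_1^* \cdots \sigma_n^*$ and each $\sigma_i^*$ is an involution commuting with the others, $\sigma^* \sigma_S^* = \sigma_{S^c}^*$, whence $\kappa_n(\sigma_{S^c}^* M^n) = \kappa_n(\sigma^* \sigma_S^* M^n) = \kappa_n(\sigma_S^* M^n)$ by (A5). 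Writing $f(k)$ for the common value $\kappa_n(\sigma_S^* M^n)$ with $|S| = k$, this reads $f(|\rho|) = f(n - |\rho|) = f(|\xi|)$ and completes the argument.

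I expect the main obstacle to be purely bookkeeping: keeping the contravariant $*$ straight and verifying the conjugation identity $\tau\sigma_i\tau^{-1} = \sigma_{\pi^{-1}(i)}$ at the level of coordinates, since an index slip there would invert the permutation and scramble the orbit argument. The conceptual steps — diagonal-invariance of $M^n$, commutativity and involutivity of the reflections, and the lone use of Duality for the complementary case — are short once the normal form is in hand.
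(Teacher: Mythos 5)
Your proof is correct. The paper states this theorem without an in-text proof (it is quoted from \cite{taylor06}), but your argument --- reducing an arbitrary symmetry to a product of elementary reflections via $\tau^* M^n = M^n$, using Permutation Invariance together with the conjugation identity to show that $\kappa_n(\sigma_S^* M^n)$ depends only on $|S|$, and invoking Duality once for the complementary case $|\rho|+|\xi|=n$ --- is exactly the natural route through axioms A4 and A5 and matches the intended argument.
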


In the next result, we refer to a set of axioms given by Scarsini in \cite{Scarsini84} for a bivariate measure of concordance, that is, for a measure of concordance defined for 2-copulas.  These amount to our axioms except that the Reflection Symmetry Property becomes $\kappa_2(\sigma_i^* C) = - \kappa_2(C)$ and the Transition Property is irrelevant.

\begin{theorem} \label{bivarext} 
  Suppose that $\kappa_2:\text{Cop}(2) \rightarrow \R$ is a bivariate measure of concordance in the sense of Scarsini.  For $p=1,2,\cdots$, let us define $\kappa_{p+2}:\text{Cop}(p+2) \rightarrow \R$ and $r_{1+p}$ by  
\[ 
  \kappa_{p+2}(C) = \frac{1}{\binom{p+2}{2}} \,  
    \underset{A \marg_2 C}{\sum} \kappa_2(A) \quad \text{and} 
    \quad r_{1+p} = \frac{2p}{2+p} 
\] 
where it is understood that the summation is over all 2-marginals of $C$.  Then  $\kappa = \{\kappa_n\}$ is a multivariate measure of concordance in our sense. 
\end{theorem}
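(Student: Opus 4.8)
The plan is to verify directly that the family $\kappa = \{\kappa_n\}$ satisfies each of the seven axioms A1--A7, exploiting the fact that $\kappa_n$ is by construction an average of the Scarsini measure $\kappa_2$ over the $\binom{n}{2}$ two-dimensional marginals of $C$. Write $C^{(i,j)}$, for $i<j$, for the $2$-marginal of $C$ in the variables $x_i,x_j$, so that $\kappa_n(C) = \binom{n}{2}^{-1}\sum_{i<j}\kappa_2(C^{(i,j)})$. The single technical fact underlying every step is a compatibility lemma describing how passing to a $2$-marginal interacts with symmetries: applying an elementary reflection $\sigma_k$ to $C$ leaves $C^{(i,j)}$ unchanged when $k\notin\{i,j\}$, and reflects the first or second argument of $C^{(i,j)}$ when $k=i$ or $k=j$; similarly a permutation $\tau$ carries the $2$-marginals of $C$ bijectively onto those of $\tau^*C$, up to transposing the two arguments. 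I would prove this once at the level of random variables, where it reads transparently: $C^{(i,j)}$ is the copula of $(X_i,X_j)$, and $\sigma_k^*C$ is the copula obtained by replacing $X_k$ with $1-X_k$.

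Granting this lemma, A1, A3, and A4 are immediate. Every $2$-marginal of $M^n$ is $M^2$ and every $2$-marginal of $\Pi^n$ is $\Pi^2$, so $\kappa_n(M^n)=\kappa_2(M^2)=1$ and $\kappa_n(\Pi^n)=\kappa_2(\Pi^2)=0$; uniform convergence $C_m\to C$ forces uniform convergence of each $2$-marginal, so continuity of $\kappa_2$ transfers to the finite average; and permutation invariance of $\kappa_2$ in its two arguments, combined with the bijection on marginals, makes the average permutation invariant. For A2 I would note that $A\prec B$ passes to marginals: evaluating $A\leq B$ at points with the remaining coordinates set to $1$ gives $A^{(i,j)}\leq B^{(i,j)}$, while evaluating $\overline{A}\leq\overline{B}$ at points with the remaining coordinates set to $0$ gives $\overline{A^{(i,j)}}\leq\overline{B^{(i,j)}}$, so each term is monotone by A2 for $\kappa_2$. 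Duality A5 is the special case $\sigma^n=\sigma_1\cdots\sigma_n$ of the lemma, which reflects both arguments of every $C^{(i,j)}$, whereupon bivariate duality $\kappa_2(\sigma_1^*\sigma_2^*A)=\kappa_2(A)$ applies termwise.

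The two axioms requiring genuine bookkeeping are A6 and A7. For RSP I would fix a pair $i<j$ and sum over all $2^n$ reflections $\rho\in R_n$; by the lemma the marginal of $\rho^*C$ in $x_i,x_j$ depends only on whether $\rho$ reflects coordinates $i$ and $j$, so the reflections group into four classes of size $2^{n-2}$ realizing $A,\sigma_1^*A,\sigma_2^*A,\sigma_1^*\sigma_2^*A$ with $A=C^{(i,j)}$. Scarsini's bivariate RSP, $\kappa_2(\sigma_1^*A)=\kappa_2(\sigma_2^*A)=-\kappa_2(A)$ together with $\kappa_2(\sigma_1^*\sigma_2^*A)=\kappa_2(A)$, cancels these four contributions, so the inner sum vanishes for every pair and hence so does the whole double sum. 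For TP, writing the hypothesis as $C=E_1$ with $E$ an $(n+1)$-copula, I would split the $2$-marginals of $E$ and of $\sigma_1^*E$ according to whether they involve the first coordinate. The $\binom{n}{2}$ marginals not involving coordinate $1$ are exactly the $2$-marginals of $C$, are unaffected by $\sigma_1^*$, and so each contributes twice; the $n$ marginals that do involve coordinate $1$ are reflected in their first argument and cancel in pairs by bivariate RSP. This yields
\begin{equation*}
  \kappa_{n+1}(E) + \kappa_{n+1}(\sigma_1^* E) = \frac{2}{\binom{n+1}{2}} \sum_{A \marg_2 C} \kappa_2(A) = \frac{2\binom{n}{2}}{\binom{n+1}{2}}\,\kappa_n(C) = \frac{2(n-1)}{n+1}\,\kappa_n(C),
\end{equation*}
which is exactly $r_n\kappa_n(C)$, since setting $n=1+p$ in $r_{1+p}=\tfrac{2p}{2+p}$ gives $r_n=\tfrac{2(n-1)}{n+1}$; the base case $n=2$ is included, with $\kappa_2$ the given Scarsini measure.

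The main obstacle is organizational rather than conceptual: all the content lives in the compatibility lemma and in the two counting arguments for A6 and A7, where one must correctly identify which reflections act nontrivially on a given $2$-marginal and check that the combinatorial factors $2^{n-2}$ and $\binom{n}{2}/\binom{n+1}{2}$ reproduce exactly the normalizing constant $r_n$. I would therefore invest the most care in stating the lemma cleanly and in the index bookkeeping of the Transition Property, since a single miscount there would throw off the value of $r_n$.
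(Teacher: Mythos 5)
Your proof is correct. Note that the paper itself gives no proof of this theorem --- it is quoted as a known result from the reference on multivariate measures of concordance --- so there is nothing in the text to compare against line by line; your direct axiom-by-axiom verification is the natural (and presumably the original) argument. The key points all check out: the compatibility of $2$-marginals with symmetries is exactly the observation $(\sigma_i^*C)_j = \sigma_i^*(C_j)$ that the paper itself uses elsewhere; the four-way partition of the $2^n$ reflections into classes of size $2^{n-2}$ does kill RSP via Scarsini's sign rule $\kappa_2(\sigma_i^*A)=-\kappa_2(A)$ together with bivariate duality; and your Transition Property bookkeeping is right --- the $n$ marginals involving the new coordinate cancel in pairs, the remaining $\binom{n}{2}$ each appear twice, and $2\binom{n}{2}/\binom{n+1}{2} = 2(n-1)/(n+1)$ matches $r_{1+p}=2p/(2+p)$ at $p=n-1$, consistent with $r_2=2/3$ and with the limit $r_n\to 2$ the paper later attributes to this construction. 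The one point worth making explicit when you write this up is the survival-function step in A2, namely that $\overline{A^{(i,j)}}$ is obtained from $\overline{A}$ by setting the remaining coordinates to $0$ (using $P(X_k>0)=1$), since that is the only place where the marginal-passing argument is not a literal substitution of $1$'s.
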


\section{Reflection-reduction}

Given a measure of concordance $\kappa = \{\kappa_n\}$, we would like to be able to calculate $\kappa_n(\sigma_{i_1}^* \cdots \sigma_{i_k}^* C)$ whenever $C$ is an $n$-copula and $i_1 < \cdots < i_k$.  Notice that beause measures of concordance are invariant under permutations, we can always rearrange the order of the variables in $C$ and calculate $\kappa_n(\sigma_k^* \cdots \sigma_1^* C)$.

In obtaining this result and later ones,  it is convenient to expand our list of transition constants $\{r_n\}_{n=2}^{\infty}$ by setting $r_0 = r_1 = 0$.  It will also be convenient (later) to sometimes talk of 1-copulas and 0-copulas.  We assume the existence of a single 1-copula, namely $\Pi^1(t) = t$ and a single 0-copula, the constant 1.  Notice that if we do this in the presence of a given measure of concordance $\{\kappa_n\}_{n=2}^{\infty}$, then the Transition Property 
\[
	\kappa_n(C) + \kappa_n(\sigma_1^*C) = r_{n-1} \kappa_{n-1}(C_1)
\]
is still true for $n = 1,2$ if we take $\kappa_1$ and $\kappa_0$ to be the zero functions.

The following is proved in a somewhat more general form in \cite{taylor_2008}.

\begin{theorem} \label{ref_red}
	For $C$ an $n$-copula, $n \geq 2$, and $k \leq n$, we have
\begin{equation} \label{ref_red1}
\begin{split}
	\kappa_n(\sigma_k^* \cdots \sigma_1^* C) &= r_{n-1} r_{n-2} \cdots r_{n-k} \, \kappa_{n-k}(C_{1 \ldots k}) \\
	- &r_{n-1} \cdots r_{n-k+1} \, \underset{i_1 < \cdots < i_{k-1} \leq k}{\sum} \kappa_{n-k+1}(C_{i_1 \ldots i_{k-1}}) \\
	+& r_{n-1} \cdots r_{n-k+2} \, \underset{i_1 < \cdots < i_{k-2} \leq k}{\sum} \kappa_{n-k+2}(C_{i_1 \ldots i_{k-2}}) \\
	- & \ldots + (-1)^k \kappa_n(C).
\end{split}	
\end{equation}
\end{theorem}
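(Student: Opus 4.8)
The plan is to induct on $k$, peeling off one elementary reflection at a time and invoking part (a) of the first theorem at each stage. It is convenient to first rewrite the claimed identity (\ref{ref_red1}) in the compact form
\[
	\kappa_n(\sigma_k^* \cdots \sigma_1^* C) = \sum_{m=0}^{k} (-1)^{k-m} \, r_{n-1} r_{n-2} \cdots r_{n-m} \sum_{\substack{S \subseteq \{1,\ldots,k\} \\ |S| = m}} \kappa_{n-m}(C_S),
\]
where $C_S$ denotes the marginal obtained by setting the variables indexed by $S$ equal to $1$ (with $C_\emptyset = C$), and the product $r_{n-1} \cdots r_{n-m}$ is read as $1$ when $m=0$. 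The boundary cases are handled automatically by the conventions $r_0 = r_1 = 0$ and $\kappa_0 = \kappa_1 = 0$: when $m$ reaches $n$ the corresponding marginal is a $0$-copula, and both its $\kappa$-value and the factor $r_{n-m}=r_0$ vanish. The base case $k=1$ is exactly part (a) of the first theorem with $i=1$, namely $\kappa_n(\sigma_1^* C) = r_{n-1}\kappa_{n-1}(C_1) - \kappa_n(C)$.

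For the inductive step I would set $D = \sigma_{k-1}^* \cdots \sigma_1^* C$, so that $\sigma_k^* \cdots \sigma_1^* C = \sigma_k^* D$, and apply part (a) of the first theorem to the $n$-copula $D$ with $i=k$:
\[
	\kappa_n(\sigma_k^* D) = r_{n-1} \kappa_{n-1}(D_k) - \kappa_n(D).
\]
The first key step is a commutation lemma: since each $\sigma_i$ with $i<k$ acts only on coordinate $i$, it leaves the $k$-th marginalization untouched, so $D_k = (\sigma_{k-1}^* \cdots \sigma_1^* C)_k = \sigma_{k-1}^* \cdots \sigma_1^* (C_k)$, where on the right the reflections are understood as reflections of $I^{n-1}$ acting on the surviving coordinates $1,\ldots,k-1$ of the $(n-1)$-copula $C_k$. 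This should follow directly from the definition (\ref{symm_cop}) of $\xi^* C$ together with the fact that setting $x_k=1$ and reflecting a coordinate $i \neq k$ operate on disjoint variables.

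With the lemma in hand I would apply the inductive hypothesis twice: once to $\kappa_n(D)=\kappa_n(\sigma_{k-1}^* \cdots \sigma_1^* C)$, which is the $(k-1)$-identity for $C$, and once to $\kappa_{n-1}(D_k)=\kappa_{n-1}(\sigma_{k-1}^* \cdots \sigma_1^* (C_k))$, which is the $(k-1)$-identity for the $(n-1)$-copula $C_k$, whose marginals satisfy $(C_k)_S = C_{S \cup \{k\}}$. The final step is the bookkeeping that recombines the two resulting sums. In the term $r_{n-1}\kappa_{n-1}(D_k)$, reindexing $m \mapsto m+1$ converts the $(k-1)$-level sum over subsets $S \subseteq \{1,\ldots,k-1\}$ into a sum over subsets $T = S \cup \{k\}$ of $\{1,\ldots,k\}$ that contain $k$, the absorbed factor $r_{n-1}$ completing the product $r_{n-1}\cdots r_{n-m}$ and the sign $(-1)^{(k-1)-m}$ becoming $(-1)^{k-(m+1)}$; meanwhile $-\kappa_n(D)$ contributes precisely the subsets of $\{1,\ldots,k\}$ that omit $k$, with the sign $(-1)^{(k-1)-m}$ flipping to $(-1)^{k-m}$. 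Since every subset of $\{1,\ldots,k\}$ either contains $k$ or omits it, the two families merge into the full sum over all subsets, yielding exactly the compact identity for $k$.

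I expect the main obstacle to be purely clerical: keeping the powers of $-1$, the telescoping products of the $r_j$, and the ``contains $k$ / omits $k$'' partition of subsets aligned so that the merge is exact, especially at the two ends $m=0$ and $m=k$, where only one of the two families contributes. The commutation lemma, while essential, should be routine once phrased carefully.
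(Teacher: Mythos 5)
Your proposal is correct and follows the same route as the paper: induction on $k$, using the Transition Property in the form $\kappa_n(\sigma_i^*D)=r_{n-1}\kappa_{n-1}(D_i)-\kappa_n(D)$ together with the commutation $(\sigma_i^*C)_j=\sigma_i^*(C_j)$ for $i<j$, and then applying the inductive hypothesis to both $\kappa_n(D)$ and $\kappa_{n-1}(D_k)$. The paper merely exhibits the $k=1$ and $k=2$ cases and declares the general induction "easily established," whereas you have written out the subset-indexed bookkeeping explicitly; the content is identical.
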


\begin{proof}
	The result is easily established by induction with respect to $k$.  We consider only the cases $k = 1$ and $k=2$ to exhibit the procedure.

	By the Transition Property, we have $\kappa_n(\sigma_1^* C) = r_{n-1} \kappa_{n-1}(C_1) - \kappa_n(C)$, so the $k=1$ case is trivial.

	Notice that if $i<j$, then $(\sigma_i^*C)_j = \sigma_i^*(C_j)$, so that we might write either of these expressions as $\sigma_i^*C_j$.  Then 
\begin{multline*}
	\kappa_n(\sigma_2^* \sigma_1^* C) = r_{n-1} \kappa_{n-1}(\sigma_1^*C_2) - \kappa_n(\sigma_1^*C) \\
	= r_{n-1} r_{n-2} \kappa_{n-2}(C_{12}) - r_{n-1} \, \Big( \kappa_{n-1}(C_1) + \kappa_{n-1}(C_2) \Big) + \kappa_n(C).
\end{multline*}
Thus we have the $k=2$ case.
\end{proof}

These sorts of calculations can be somewhat simplified by the following notation:  For $n \geq 1$ and $k = 0,1, \ldots, n+1$, set 
\[
	R_{n,k} = 
	\begin{cases}
		1, \quad k=0, \\
		r_n r_{n-1} \cdots r_{n-k+1}, \quad 1 \leq k \leq n-1, \\
		0 \quad k = n, n+1.
	\end{cases}
\]
Then Equation (\ref{ref_red1}) becomes 
\begin{equation} \label{ref_red2}
	\kappa_n(\sigma_k^* \cdots \sigma_1^* C)  =  \sum_{j=0}^k (-1)^{k+j} R_{n-1,j} 
	\underset{i_1 < \cdots < i_{n-j} \leq k}{\sum} \kappa_j (C_{i_1 \ldots i_{n-j}}).
\end{equation}

We use the $R_{n,k}$ to write an extended version of the Transition Property:

\begin{theorem} \label{extended_tp}
	For $C$ an $n$-copula, $n \geq 2$, and $1 \leq k \leq n$, we have
\[
	\sum_{j=0}^k \; \underset{i_1 < \cdots i_j \leq k}{\sum} \kappa_n( \sigma_{i_1}^* \cdots \sigma_{i_j}^* C ) = 
	R_{n-1,k} \; \kappa_{n-k}(C_{12 \ldots k}).
\]
\end{theorem}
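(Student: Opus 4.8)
The plan is to prove the identity by induction on $k$, treating the statement for a fixed $k$ as a claim about every $m$-copula with $m \geq k$, so that in the inductive step I may invoke the hypothesis on a lower-dimensional marginal. The base case $k=1$ is immediate: the left-hand side is $\kappa_n(C) + \kappa_n(\sigma_1^* C)$, which by the identity $r_{n-1}\kappa_{n-1}(C_i)=\kappa_n(C)+\kappa_n(\sigma_i^*C)$ of Theorem 1(a) (with $i=1$) equals $r_{n-1}\kappa_{n-1}(C_1)$. Since $R_{n-1,1}=r_{n-1}$ in all cases (including $n=2$, where both equal $r_1=0$), this is exactly the right-hand side.

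For the inductive step, I would first observe that the outer sum on the left ranges over all subsets $S \subseteq \{1,\ldots,k\}$, the term indexed by $S = \{i_1 < \cdots < i_j\}$ being $\kappa_n(\sigma_{i_1}^* \cdots \sigma_{i_j}^* C)$. Splitting this sum according to whether $k \in S$ pairs each subset $S \subseteq \{1,\ldots,k-1\}$ with $S \cup \{k\}$. Writing $D_S = \prod_{i \in S}\sigma_i^* C$, the left-hand side becomes
\[
\sum_{S \subseteq \{1,\ldots,k-1\}} \Big( \kappa_n(D_S) + \kappa_n(\sigma_k^* D_S) \Big).
\]
Applying Theorem 1(a) with $i=k$ to each pair gives $\kappa_n(D_S) + \kappa_n(\sigma_k^* D_S) = r_{n-1}\,\kappa_{n-1}\big((D_S)_k\big)$, so the whole expression equals $r_{n-1}\sum_{S} \kappa_{n-1}\big((D_S)_k\big)$.

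The key simplification is the commutation relation $(\sigma_i^* C)_k = \sigma_i^*(C_k)$ valid for $i < k$, established in the proof of Theorem \ref{ref_red}. Since every index in $S$ is strictly less than $k$, it yields $(D_S)_k = \prod_{i \in S}\sigma_i^*(C_k)$, and hence the inner sum is precisely the left-hand side of the theorem applied to the $(n-1)$-copula $C_k$ with parameter $k-1$. The induction hypothesis then rewrites it as $R_{n-2,k-1}\,\kappa_{n-1-(k-1)}\big((C_k)_{12\ldots(k-1)}\big)$. Finally I would note that $(C_k)_{12\ldots(k-1)} = C_{12\ldots k}$ and $n-1-(k-1)=n-k$, and verify the constant recursion $r_{n-1}R_{n-2,k-1} = R_{n-1,k}$ directly from the definition of $R_{n,k}$, which closes the induction.

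I expect the main obstacle to be the careful index bookkeeping rather than any single hard estimate: specifically, confirming that marginalizing the $k$-th variable leaves the lower reflection indices undisturbed, so that the inner sum is genuinely an instance of the theorem one dimension down, together with the degenerate behavior forced by the conventions $r_0=r_1=0$ and $\kappa_0=\kappa_1=0$. The boundary case $k=n$ warrants a separate sanity check: there the right-hand side vanishes since $R_{n-1,n}=0$ and $\kappa_0\equiv 0$, while the left-hand side sums $\kappa_n$ over all reflections of $I^n$ and so is zero by the Reflection Symmetry Property; the inductive machinery reproduces this automatically through $R_{n-2,n-1}=0$.
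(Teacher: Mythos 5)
Your proof is correct and follows essentially the same route as the paper: the paper's proof (which only displays the $k=2$ case as "the mechanism") is exactly your pairing of each reflection subset $S\subseteq\{1,\ldots,k-1\}$ with $S\cup\{k\}$, applying the Transition Property to collapse each pair to a term in $\kappa_{n-1}$ of the marginal $C_k$, and then recursing. Your explicit treatment of the commutation $(\sigma_i^*C)_k=\sigma_i^*(C_k)$, the constant recursion $r_{n-1}R_{n-2,k-1}=R_{n-1,k}$, and the degenerate $k=n$ boundary just makes precise what the paper leaves implicit.
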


\begin{proof}
For $k = 1$, this is just the Transition Property.

The mechanism of the proof is fully displayed by the $k = 2$ case:
\begin{align*}
	\big( \kappa_n(C) &+ \kappa_n(\sigma_2^* C) \big) + \big( \kappa_n(\sigma_1^* C) + \kappa_n(\sigma_2^* \sigma_1^* C) \big) \\ 
	= & \; r_{n-1} \kappa_{n-1}(C_2) + r_{n-1} \kappa_{n-1}(\sigma_1^* C_2) = r_{n-1} r_{n-2} \kappa_{n-2}(C_{12}).
\qedhere
\end{align*}
\end{proof}

\section{Measures of concordance and marginals}

If $A$ is a $p$-copula and $B$ a $q$-copula, we can construct a $(p+q)$-copula $A \otimes B$ by
\[
	A \otimes B (x,y) = A(x) B(y)
\]
where $x \in I^p$ and $y \in I^q$.  This is also true if we take $A$ or $B$ to be $\Pi^1$; see Theorem 6.6.3 of \cite{schweizsklar83}.  We can then show the following:

\begin{theorem} \label{stepdown}
	If $A$ is an $n$-copula, $n \geq 2$, and $k = 1,2,\ldots$, then 
\begin{equation} \label{pi_reduce}
	\kappa_{n+k}(\Pi^k \otimes A) = \frac{1}{2^k} R_{n+k-1,k} \; \kappa_n(A).
\end{equation}
In particular, $\kappa_{n+k}(\Pi^k \otimes M^n) = \tfrac{1}{2^k} R_{n+k-1,k}$.
\end{theorem}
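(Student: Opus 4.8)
The plan is to apply the Extended Transition Property (Theorem \ref{extended_tp}) to the $(n+k)$-copula $C = \Pi^k \otimes A$, reflecting exactly the first $k$ coordinates, and then to exploit the fact that reflecting a $\Pi$-factor leaves the copula unchanged, so that all terms on the left-hand side collapse to a single common value.

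First I would set $C = \Pi^k \otimes A$ and note that it is an $(n+k)$-copula. Applying Theorem \ref{extended_tp} to this $C$ with the index $k$ gives
\[
	\sum_{j=0}^k \; \underset{i_1 < \cdots < i_j \leq k}{\sum} \kappa_{n+k}(\sigma_{i_1}^* \cdots \sigma_{i_j}^* C) = R_{n+k-1,k} \; \kappa_n(C_{12 \ldots k}).
\]
For the right-hand side, the marginal $C_{12\ldots k}$ is obtained by setting the first $k$ arguments to $1$; since $\Pi^k(1,\ldots,1) = 1$ and the tensor product factors as $\Pi^k(x)\,A(y)$, we have $C_{12\ldots k} = A$, so the right side equals $R_{n+k-1,k}\,\kappa_n(A)$.

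For the left-hand side the crucial observation is that $\sigma_i^* \Pi^k = \Pi^k$ for every $i \leq k$. Using (\ref{symm_cop}) with $\mu$ taken to be $k$-dimensional Lebesgue measure, the reflection $\sigma_i$ sends the edge $[0,x_i]$ to $[1-x_i,1]$, which has the same length $x_i$, so the product defining $\Pi^k$ is unchanged; equivalently, reflecting one coordinate of independent uniform variables preserves both uniformity and independence. Consequently each symmetry $\sigma_{i_1}^* \cdots \sigma_{i_j}^*$ with indices at most $k$ acts only on the $\Pi^k$ factor and fixes $C$ entirely, so that $\kappa_{n+k}(\sigma_{i_1}^* \cdots \sigma_{i_j}^* C) = \kappa_{n+k}(C)$ for every term in the double sum.

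Finally I would count the terms: the double sum ranges over all subsets $\{i_1 < \cdots < i_j\}$ of $\{1,\ldots,k\}$, and there are $\sum_{j=0}^k \binom{k}{j} = 2^k$ of these (the $j=0$ term being the unreflected $\kappa_{n+k}(C)$). Hence the left side equals $2^k\,\kappa_{n+k}(C)$, and equating the two sides and dividing by $2^k$ yields (\ref{pi_reduce}). The ``in particular'' claim then follows by taking $A = M^n$ and invoking the Normalization axiom $\kappa_n(M^n) = 1$. The only substantive step — and the one worth stating with care — is the invariance $\sigma_i^* \Pi^k = \Pi^k$, since it is precisely what renders all $2^k$ reflected terms identical; once that is established the remainder is bookkeeping.
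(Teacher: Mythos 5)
Your proof is correct and rests on the same key observation as the paper's, namely that $\sigma_i^*(\Pi^k \otimes A) = \Pi^k \otimes A$ for $i \leq k$; the only organizational difference is that you invoke Theorem \ref{extended_tp} once and count the $2^k$ identical terms, whereas the paper inducts on $k$ using the plain Transition Property one coordinate at a time (which is exactly how Theorem \ref{extended_tp} is itself proved). This is a clean and legitimate shortcut, and no gaps remain.
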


\begin{proof}
	Consider $\Pi^1 \otimes A$.  We may take $\Pi^1$ as the distribution function of a random variable $X$ that is uniformly distributed over $I$ and $A$ as the copula of $(X_1, \ldots, X_n)$ where each $X_i$ is a random variable uniformly distributed over $I$.  If we suppose $X$ and $(X_1, \ldots, X_n)$ are independent, then $\Pi^1 \otimes A$ is the copula of $(X, X_1, \ldots, X_n)$.  It follows that
\[
	\sigma_1^*( \Pi^1 \otimes A) (x,x_1, \ldots, x_n) = P( 1-X < x, \, X_1 < x_1, \ldots, X_n < x_n).
\]
Notice that $1-X$ is uniformly distributed over $I$ and is independent of $(X_1, \ldots, X_n)$.  Thus
\begin{gather*}
	P( 1-X < x, \, X_1 < x_1, \ldots, X_n < x_n) = x \, P( X_1 < x_1, \ldots, X_n < x_n) \\
	= P( X < x, \, X_1 < x_1, \ldots, X_n < x_n) = \Pi^1 \otimes A (x,x_1,\ldots,x_n).
\end{gather*}
That is, $\sigma_1^*( \Pi^1 \otimes A) = \Pi^1 \otimes A$.

Similarly, we have $\sigma_i^* (\Pi^k \otimes A) = \Pi^k \otimes A$ for $i = 1, \ldots, k$.

Next, by the Transition Property,
\[
	\kappa_{n+1}(\Pi^1 \otimes A) + \kappa_{n+1}(\sigma_1^*( \Pi^1 \otimes A )) = r_n \, \kappa_n (( \Pi^1 \otimes A)_1).
\]
Thus
\[
	2 \, \kappa_{n+1}( \Pi^1 \otimes A) = R_{n,1} \, \kappa_n(A)
\]
which is the $k=1$ case of (\ref{pi_reduce}).  Using this $k=1$ case, we see that 
\[
	\kappa_{n+2}( \Pi^2 \otimes A) = \frac{1}{2} \, r_{n+1} \, \kappa_{n+1}( \Pi^1 \otimes A) = 
	\frac{1}{2^2} \, r_{n+1} r_n \, \kappa_n(A).
\]
The proof of the general case is now clear.
\end{proof}

It was brought to our attention by M. \'{U}beda-Flores that if $C$ is a 3-copula, then 
\[
  \kappa_3(C) = \frac{1}{3} \, (\kappa_2(C_1) + \kappa_2(C_2) + \kappa_2(C_3)).
\]
This turns out to be the first in an infinite list of identities in which the measure of concordance of an odd-dimensional copula can always be expressed in terms of the measures of concordance of its even-dimensional marginals.  In general these identities involve the constants $\{r_n\}$; we have a $1/3$ in the identity for the 3-copula because the fact that $r_2 = 2/3$.  We call the identities in this list \emph{\'Ubeda identities} and the constants $a_{n,k}$ \emph{\'Ubeda coefficients}.  Recall here that $A \marg_k C$ means that $A$ is a $k$-copula that is a marginal of the copula $C$.

\begin{theorem} \label{ubeda_thm}
	Let $\kappa = \{\kappa_n\}$ be a measure of concordance.  Then for $m = 1, 2, \ldots$, there exist coefficients $a_{2m+1,2}, a_{2m+1,4}, \ldots, a_{2m+1,2m}$, dependent only on $\{r_n\}$, such that
\begin{equation*} 
  \begin{split}
	\kappa_{2m+1}(C) &= a_{2m+1,2} \underset{A \marg_2 C}{\sum} \kappa_2(A) + a_{2m+1,4} \underset{A \marg_4 C}{\sum} \kappa_4(A) \\ 
	+& \cdots + a_{2m+1,2m} \underset{A \marg_{2m} C}{\sum} \kappa_{2m}(A).
  \end{split}
\end{equation*}
The coefficients may be taken to satisfy
\begin{align*}
	& \frac{1}{2} \, R_{2m,1} = a_{2m+1,2m}, \\
	& \frac{1}{2^3} \, R_{2m,3} = a_{2m+1,2m-2} + \frac{1}{2^2} \, \binom{3}{2} \, R_{2m-1,2} \; a_{2m+1,2m}, \\
	& \cdots \\
	& \frac{1}{2^{2m-1}} \, R_{2m,2m-1} = a_{2m+1,2} + \frac{1}{2^2} \, \binom{2m-1}{2} \, R_{3,2} \; a_{2m+1,4} + \cdots \\ 
	& \hspace{0.5in} + \frac{1}{2^{2m-2}} \, \binom{2m-1}{2m-2} \, R_{2m-1,2m-2} \; a_{2m+1,2m},
\end{align*}
a system of equations that uniquely determines the coefficients.  More concisely, for $p = 0,1, \ldots, m-1$, the coefficients may be taken to satisfy 
\begin{equation} \label{coeff_constraint}
	\frac{1}{2^{2p+1}} \, R_{2m,2p+1} = 
	\underset{\begin{smallmatrix} k+j=p \\ k,j \geq 0   \end{smallmatrix}}{\sum} \frac{1}{2^{2k}} \, \binom{2p+1}{2k} \, R_{2m-1-2j,2k} \; a_{2m+1,2m-2j}.
\end{equation}
\end{theorem}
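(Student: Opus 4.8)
The plan is to reduce the problem to a finite linear system for the \emph{reflection sums} and then exploit parity. Fix $n = 2m+1$. For $S \subseteq \{1,\dots,n\}$ write $\sigma_S = \prod_{i \in S}\sigma_i$, and for a $(2m{+}1)$-copula $C$ set
\[
  \Sigma_t(C) = \sum_{|S| = t} \kappa_n(\sigma_S^* C), \qquad S_j(C) = \sum_{A \marg_j C} \kappa_j(A),
\]
so that $\Sigma_0(C) = \kappa_n(C)$ is the quantity sought and the target identity reads $\kappa_n(C) = \sum_{l=1}^m a_{n,2l}\,S_{2l}(C)$. Three facts feed the system. First, Duality gives the symmetry $\Sigma_t = \Sigma_{n-t}$: since $\sigma_{\bar S} = \sigma\,\sigma_S$, we get $\kappa_n(\sigma_{\bar S}^* C) = \kappa_n(\sigma^*\sigma_S^* C) = \kappa_n(\sigma_S^* C)$ by Duality, and summing over $|S| = t$ via $S \mapsto \bar S$ yields the claim. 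Second, applying Theorem~\ref{extended_tp} to each $k$-element coordinate set, using Permutation Invariance, and summing over all such sets gives (a fixed $T$ with $|T| = t$ lies in $\binom{n-t}{k-t}$ of them), for each $k = 0,\dots,n$, the relation
\[
  \sum_{t=0}^{k} \binom{n-t}{k-t}\,\Sigma_t(C) = R_{n-1,k}\,S_{n-k}(C),
\]
which I denote $(\star_k)$. Third, since $R_{n-1,n} = 0$, the relation $(\star_n)$ is precisely the RSP, $\sum_t \Sigma_t = 0$.

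\textbf{Existence via a parity elimination.}
This is the heart of the argument. Because $n = 2m+1$ is odd, the even marginals $S_{2l}$ appear on the right of $(\star_k)$ exactly for odd $k$, namely $k = n - 2l$. I would therefore use only the odd-index relations $(\star_1), (\star_3), \dots, (\star_{2m-1})$ together with $(\star_n)=\text{RSP}$, and never the even-$k$ relations, which carry the unwanted odd marginals. The symmetry $\Sigma_t = \Sigma_{n-t}$ collapses $\Sigma_0,\dots,\Sigma_n$ to the $m+1$ independent quantities $\Sigma_0,\dots,\Sigma_m$, and in these folded variables the chosen subsystem is square and triangular with unit diagonal, hence nonsingular. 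Solving it for $\Sigma_0$ expresses $\kappa_{2m+1}(C)$ as a linear combination of $S_2,\dots,S_{2m}$ alone, with coefficients built only from the $r_n$; this simultaneously proves existence of the identity and shows that the odd marginals drop out. For $m=1$ this is just $3\Sigma_0 + \Sigma_1 = r_2 S_2$ with $\Sigma_1 = -\Sigma_0$, i.e. $\kappa_3 = \tfrac13 S_2$.

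\textbf{Identifying the coefficients and uniqueness.}
To pin down the $a_{2m+1,2l}$ and match (\ref{coeff_constraint}), I would evaluate the now-established identity on the copulas $C = \Pi^{2p+1}\otimes M^{2m-2p}$ for $p = 0,\dots,m-1$. By Theorem~\ref{stepdown} the left side is $\tfrac{1}{2^{2p+1}} R_{2m,2p+1}$, the left side of (\ref{coeff_constraint}); the even marginals are again products $\Pi^{a}\otimes M^{b}$ whose measures of concordance are read off from Theorem~\ref{stepdown}, and collecting and simplifying these contributions yields the right side of (\ref{coeff_constraint}). The displayed system is lower triangular in the unknowns ordered $a_{2m+1,2m}, a_{2m+1,2m-2},\dots,a_{2m+1,2}$: the term with $k=0,\ j=p$ contributes $a_{2m+1,2m-2p}$ with coefficient $R_{2m-1-2p,0} = 1$, while every other term involves an $a_{2m+1,2m-2j}$ with $j < p$, already determined. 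Triangularity gives both solvability and uniqueness.

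\textbf{The main obstacle.}
The genuinely delicate point is the reconciliation in the previous paragraph. On the product copulas the full sums $S_{2l}$ include contributions from marginals that drop a comonotone coordinate or retain an \emph{odd} number of independent coordinates, and these do not appear in (\ref{coeff_constraint}). One must verify that, once weighted by the $a_{2m+1,2l}$, exactly those contributions cancel, leaving the clean form (\ref{coeff_constraint}); this cancellation rests on the established relations among the $r_n$ (for instance $r_2 = \tfrac23$) and, inductively, on the lower-dimensional instances of the identity. By contrast, the bookkeeping in the parity elimination — checking nonsingularity of the odd-$k$ subsystem in the folded variables and that no odd $S_j$ survives — is routine once the symmetry $\Sigma_t = \Sigma_{n-t}$ is in hand, so I expect the cancellation computation to be where the real work lies.
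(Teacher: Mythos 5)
Your existence argument takes a genuinely different route from the paper's: the paper applies Theorem \ref{ref_red} with the full reflection and Duality to get $2\kappa_{2m+1}(C) = \sum_{j}(-1)^{j+1}R_{2m,j}\sum_{A \marg_j C}\kappa_j(A)$, and then iterates on the odd-dimensional marginals until only even ones survive, whereas you assemble the odd-$k$ instances of Theorem \ref{extended_tp} into a linear system for the reflection sums $\Sigma_t$ and fold by $\Sigma_t = \Sigma_{n-t}$. Your relations $(\star_k)$ and the folding are correct, but the justification of solvability is not: the folded odd-$k$ subsystem is \emph{not} triangular with unit diagonal. Already for $m=1$ the matrix in $(\Sigma_0,\Sigma_1)$ is $\left(\begin{smallmatrix}3&1\\2&2\end{smallmatrix}\right)$, and for $m=2$ it is $\left(\begin{smallmatrix}5&1&0\\10&6&4\\2&2&2\end{smallmatrix}\right)$; these are nonsingular (determinants $4$ and $8$), but that has to be proved by some other means, since the fold sends the high-index $\Sigma_t$ back into the low-index columns and destroys the triangular shape of the unfolded system. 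This is a repairable but real hole.

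The more serious gap is the identification of the coefficients with the system (\ref{coeff_constraint}). Substituting $C = \Pi^{2p+1}\otimes M^{2m-2p}$ does not yield (\ref{coeff_constraint}): the $2i$-marginals of this copula are all the products $\Pi^{a}\otimes M^{b}$ with $a+b=2i$, $a\le 2p+1$, $b\le 2m-2p$, and by Theorem \ref{stepdown} every one of them with $b\ge 2$ contributes the nonzero amount $\tfrac{1}{2^{a}}R_{2i-1,a}$ — including all those with $b<2m-2p$ or with $a$ even, which have no counterpart in (\ref{coeff_constraint}). You acknowledge this and defer the cancellation, but that cancellation \emph{is} the content of the step, and it is not automatic: the \'Ubeda coefficients are not unique (see the paper's example built from Theorem \ref{bivarext}), so constraints extracted from test copulas are only necessary conditions on whichever coefficients your existence step produced, and there is no a priori reason they reduce to the triangular system (\ref{coeff_constraint}). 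The paper's device is precisely to replace $M^{2m-2p}$ by copulas $E^{2m-2p}$ all of whose proper marginals are $\Pi$ and with $\kappa_{2m-2p}(E^{2m-2p})\neq 0$; then every unwanted marginal contributes $\kappa(\Pi)=0$, each test copula isolates exactly one equation of (\ref{coeff_constraint}), and the triangular shape of that system gives uniqueness of its solution. Even with this device the paper only obtains a proof ``modulo an assumption,'' since such $E^{2q}$ need not exist for every measure of concordance (the paper exhibits one for which they cannot), and it refers to \cite{taylor_2008} for the unconditional argument. As written, your proposal establishes existence of \emph{some} coefficients (once the nonsingularity claim is repaired) but not that they may be taken to satisfy (\ref{coeff_constraint}).
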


\begin{proof}[Proof modulo an assumption]
	By Duality and Theorem \ref{ref_red}, 
\begin{align*}
	\kappa_{2m+1}(C) &= \kappa_{2m+1}(\sigma_1^* \cdots \sigma_{2m+1}^* C) \\
	=& - \kappa_{2m+1}(C) + \sum_{j=1}^{2m} (-1)^{j+1} \, R_{2m,j} \, \sum_{A \marg_j C} \kappa_j(A).
\end{align*}
Thus there exist coefficients $b_j$ dependent only on $\{r_n\}$ such that
\[
	\kappa_{2m+1}(C) = \sum_{j=1}^{2m} b_j \, \sum_{A \marg_j C} \kappa_j(A).
\]
Because we can repeatedly apply this trick to $j$-copulas $A$ for which $j$ is odd, there must exist coefficients $a_{2m+1,2i}$ such that
\begin{equation} \label{ubeda_identity}
	\kappa_{2m+1}(C) = \sum_{i=1}^m a_{2m+1,2i} \, \sum_{A \marg_{2i} C} \kappa_{2i}(A).
\end{equation}

Here we make an assumption; we will discuss it after the proof:

We assume there exists a sequence of $2p$-copulas, $\{E^{2p}\}_{p=1}^{\infty}$, having the property that $\kappa_{2p}(E^{2p}) \ne 0$ and all the $(2p-1)$-marginals of $E^{2p}$ are $\Pi^{2p-1}$.  These copulas have the convenient property that if $A \marg_k E^{2p}$ for $k < 2p$, then $A = \Pi^{k}$ and hence $\kappa_k(A) = 0$.

Now choose $p \in \{0,1, \ldots, m-1\}$ and set $C = \Pi^{2p+1} \otimes E^{2m-2p}$ in (\ref{ubeda_identity}).  By Theorem \ref{stepdown},
\[
	\kappa_{2m+1}(C) = \frac{1}{2^{2p+1}} \, R_{2m, 2p+1} \; \kappa_{2m-2p}(E^{2m-2p}).
\]
We want to examine the terms on the other side of (\ref{ubeda_identity}).  Choose $i \in \{1,2,\ldots,m\}$ and suppose that $A \marg_{2i} C$.  If $2i < 2m-2p$, then $\kappa_{2i}(A) = \kappa_{2i}(\Pi^{2i}) = 0$.  So let us suppose that $2i \geq 2m-2p$; this amounts to $i = m-p,m-p+1,\ldots,m$.  Either $A = \Pi^{2i}$ or $A = \Pi^{2i-2m+2p} \otimes E^{2m-2p}$.  In the first case, $\kappa_{2i}(A) = 0$, so we suppose $A$ is of the second form.  By Theorem \ref{stepdown}, we have
\[
	\kappa_{2i}(A) = \frac{1}{2^{2i-2m+2p}} \, R_{2i-1,2i-2m+2p} \; \kappa_{2m-2p}(E^{2m-2p}).
\]
Next notice that the number of ways we can choose $A = \Pi^{2i-2m+2p} \otimes E^{2m-2p}$ as a $2i$-marginal of $C= \Pi^{2p+1} \otimes E^{2m-2p}$ is 
\[
	\binom{2p+1}{2i-2m+2p}.  
\]
It follows that 
\[
	\sum_{A \marg_{2i} C} \kappa_{2i}(A) = \frac{1}{2^{2i-2m+2p}} \, \binom{2p+1}{2i-2m+2p} \, R_{2i-1,2i-2m+2p} \; \kappa_{2m-2p}(E^{2m-2p})
\]
assuming $2i \geq 2m-2p$.  If we substitute these values into (\ref{ubeda_identity}) and divide out $\kappa_{2m-2p}(E^{2m-2p})$, we obtain
\begin{equation*}
	\frac{1}{2^{2p+1}} \, R_{2m, 2p+1} = \sum_{i=m-p}^m \frac{1}{2^{2i-2m+2p}} \, \binom{2p+1}{2i-2m+2p} \, R_{2i-1,2i-2m+2p} \; a_{2m+1,2i}.
\end{equation*}
Making the substitutions $k=i-m+p$ and $j=m-i$ yields (\ref{coeff_constraint}), and we are done.
\end{proof}

What is the status of our assumption in the proof?  To begin with, in ``most'' cases, a suitable $E^{2p}$ does exist.

\begin{example}
	Examples are given in \cite{dolati06} and \cite{ubeda_2008} of $n$-copulas $E^n \ne \Pi^n$ having the property that each of their $(n-1)$-marginals are $\Pi^{n-1}$.  The following particularly simple instance, based on a construction in \cite{dolati06}, will suffice for our purposes:  For $n \geq 2$, define 
\begin{equation} \label{En_def}
	E^n(x_1, \ldots, x_n) = \Pi_{i=1}^n x_i + \theta \, \Pi_{i=1}^n x_i \, (1-x_i)
\end{equation}
where $\theta$ is a parameter such that $-1 \leq \theta \leq 1$.  We see that the boundary conditions for a copula are trivially true for $E^n$.  The density of $E^n$ is 
\[
	\delta_n(x_1,\ldots,x_n) = \frac{\partial^n E^n}{\partial x_1 \ldots \partial x_n} = 
	1 + \theta \, \Pi_{i=1}^n (1-2x_i).
\]
Since $0 \leq x_i \leq 1$, we see that $\delta_n \geq 0$, hence $E^n$ is an $n$-copula.  It is easily checked that every $(n-1)$-marginal of $E^n$ is $\Pi^{n-1}$.

We want to see that $\kappa_{2m}(E^{2m}) \ne 0$ for certain measures of concordance $\kappa$.

Recall that $\sigma = \sigma_1 \cdots \sigma_n$.  The density of $\sigma^* E^n$ must be 
\[
	\delta_n(1-x_1, \ldots, 1-x_n) = 1 + (-1)^n \, \theta \, \Pi_{i=1}^n (1-2x_i).
\]
It follows that 
\[
	\sigma^* E^n(x_1, \ldots, x_n) = \Pi_{i=1}^n x_i + (-1)^n \, \theta \, \Pi_{i=1}^n x_i \, (1-x_i).
\]
Thus 
\[
	E^n + \sigma^* E^n = \begin{cases}
		2 \, \Pi^n, \quad n \text{ odd}, \\
		2 \, E^n, \quad n \text{ even}.
	\end{cases}
\]

Now consider measures of concordance of the form 
\[
	\kappa_n(C) = \alpha_n \, \left( \int_{I^n} (E^n + \sigma^* E^n ) \, d\mu_n - \frac{1}{2^{n-1}} \right)
\]
where $\alpha_n \ne 0$ and $\mu_n$ is a probability measure with all its mass on $(0,1)^n$.  (Spearman's rho, Gini's coefficient, and Blomqvist's beta have this form.)  Since $\kappa_n(\Pi^n) = 0$, we see that 
\[
	\int_{I^n} ( \Pi^n + \sigma^* \Pi^n ) \, d\mu_n = 2 \, \int_{I^n} \Pi^n \, d\mu_n = \frac{1}{2^{n-1}}.
\]
Then
\[
	\kappa_{2m}(E^{2m}) = 2 \, \alpha_{2m} \, \theta \, \int_{I^{2m}} \Pi_{i=1}^{2m} x_i \, (1-x_i) \; d\mu_{2m}(x_1, \ldots, x_{2m}).
\]
Thus $\kappa_{2m}(E^{2m}) \ne 0$ if $\theta \ne 0$.

Therefore the assumption in the proof of Theorem \ref{ubeda_thm} holds for Spearman's rho, Gini's coefficient, and Blomqvist's beta.
\end{example}

\begin{example}
	Kendall's tau is given by
\[
	\tau_n(C) = \alpha_n \, \left( \int_{I^n} C \, dC - \frac{1}{2^n} \right)
\]
where $\alpha_n \ne 0$.  Using the $E^n$ defined by (\ref{En_def}) and the formula for its density, one calculates that 
\[
	\tau_n(E^n) = \alpha_n \, \theta \, \left( \left( \frac{1}{6} \right)^n + \left( - \frac{1}{6} \right)^n \right),
\]
which is nonzero if $\theta \ne 0$ and $n$ is even.  Thus the assumption holds for Kendall's tau.
\end{example}

However the assumption of the existence of $E^{2p}$ in the proof of Theorem \ref{ubeda_thm} is not required for the validity of the theorem.  A general proof is given in \cite{taylor_2008} and a description of the \'Ubeda coefficients equivalent to those satisfying Equation (\ref{coeff_constraint}) of Theorem \ref{ubeda_thm} is derived without the assumption, but the proof is less accessible and the notation is markedly different.

\begin{example}
	It turns out that the $E^{2p}$ copulas assumed in the proof of Theorem \ref{ubeda_thm} do not always exist for a given measure of concordance.

To see this, let $\kappa_2$ be a bivariate measure of concordance and extend it to a measure concordance $\kappa = \{\kappa_n\}_{n=2}^{\infty}$ in the manner described in Theorem \ref{bivarext}.  Notice that we can choose our \'Ubeda coefficients to be 
\[
	a_{2m+1, 2k} = \begin{cases}
			\frac{1}{\binom{2m+1}{2}} \quad \text{for } k=1, \\
			0 \quad \text{for } k \geq 2.
		\end{cases}
\]
But if the $E^{2p}$ copulas exist for this $\kappa$, then the \'Ubeda coefficients must satisfy Equation (\ref{coeff_constraint}), and we must, in particular, have $a_{2m+1,2m} = (1/2) \, R_{2m,1} \ne 0$.  This is a contradiction.

However we happen to know that we could, if we wished, choose the \'Ubeda coefficients so they satisfied Equation (\ref{coeff_constraint}).  We thus also see from this example that the \'Ubeda coefficients are not unique.
\end{example}

Our last comment about the \'Ubeda coefficients is to probe a little more deeply into their structure.  We first define a sequence of numbers $\{\gamma_{2k+1}^*\}_{k=0}^{\infty}$ by the system of equations
\begin{equation} \label{gamma_def}
\begin{split}
	&\binom{1}{0} \, \gamma_1^* = 1, \\
	&\binom{3}{0} \, \gamma_3^* + \binom{3}{2} \, \gamma_1^* = 1, \\
	&\binom{5}{0} \, \gamma_5^* + \binom{5}{2} \, \gamma_3^* + \binom{5}{4} \, \gamma_1^* = 1, \\
	&\text{etc.}
\end{split}
\end{equation}
The first few terms of the sequence are displayed in Table \ref{gamma_table}.  
\begin{table}
\begin{center}
\renewcommand{\arraystretch}{1.25}
\begin{tabular}{ l | c c c c c }  \hline
  $k$	& $1$	& $3$  & $5$  & $7$  & $9$ \\ \hline
  $\gamma_k^*$  & $1$  & $-2$  & $16$  & $-272$  & $7936$ \\ 
  $\frac{1}{2^k} \, \gamma_k^*$	&  $\frac{1}{2}$  & $- \frac{1}{4}$  & $\frac{1}{2}$  & $- \frac{17}{8}$  & $\frac{31}{2}$ \\
 \end{tabular}
\caption{Values of $\gamma_k^*$ and $\gamma_k^*/2^k$} \label{gamma_table}
\end{center}
\end{table}
Next we note that for any measure of concordance $\kappa$ and associated sequence of transition constants $\{r_n\}_{n=2}^{\infty}$ we have 
\begin{equation} \label{R_relation}
	R_{m+n, m} \, R_{n, p} = R_{m+n, m+p}
\end{equation}
for $n = 1,2,\ldots$ and $m,p = 0,1,2,\ldots$.

\begin{theorem}
	If the \'Ubeda coefficients $a_{2m+1,2k}$ of a measure of concordance satisfy Equation (\ref{coeff_constraint}), then
\begin{equation} \label{ubeda_gamma}
	a_{2m+1,2k} = \frac{1}{2^{2m+1-2k}} \, \gamma_{2m+1-2k}^* \, R_{2m, 2m+1-2k}
\end{equation}
for $m = 1,2,\ldots$ and $k = 1, \ldots, m$.
\end{theorem}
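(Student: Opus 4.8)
The plan is to exploit the fact, noted in Theorem~\ref{ubeda_thm}, that the system (\ref{coeff_constraint}) is triangular and therefore uniquely determines the coefficients $a_{2m+1,2k}$: the $p=0$ equation fixes $a_{2m+1,2m}$, and for each successive $p$ the corresponding equation expresses $a_{2m+1,2m-2p}$ in terms of coefficients already determined. Consequently it suffices to verify that the proposed values (\ref{ubeda_gamma}) satisfy (\ref{coeff_constraint}) for every $p = 0,1,\ldots,m-1$; uniqueness then forces equality. So I would simply substitute (\ref{ubeda_gamma}) into the right-hand side of (\ref{coeff_constraint}) and show that it reduces to the left-hand side.

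For the substitution, write the relevant coefficient in the form dictated by (\ref{ubeda_gamma}): setting $2k = 2m-2j$ gives $a_{2m+1,2m-2j} = \tfrac{1}{2^{2j+1}}\,\gamma_{2j+1}^*\,R_{2m,2j+1}$. Inserting this into the $p$-th equation of (\ref{coeff_constraint}) produces, for each term indexed by $k+j=p$, the product $R_{2m-1-2j,2k}\,R_{2m,2j+1}$ of two transition-constant strings. The key algebraic step is to collapse this product using the identity (\ref{R_relation}): applying it with total index $2m$, split as $(2j+1)+(2m-1-2j)$, and with $2k$ appended, gives
\[
	R_{2m,2j+1}\,R_{2m-1-2j,2k} = R_{2m,(2j+1)+2k} = R_{2m,2p+1},
\]
since $k+j=p$. (The hypothesis $j \le p \le m-1$ keeps the middle index $2m-1-2j \ge 1$, so (\ref{R_relation}) applies.) Because this value no longer depends on the summation index, it factors out, and the powers of two combine as $2^{-2k}\cdot 2^{-(2j+1)} = 2^{-(2p+1)}$.

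After factoring, the right-hand side of (\ref{coeff_constraint}) becomes $\tfrac{1}{2^{2p+1}}\,R_{2m,2p+1}\,\sum_{k+j=p}\binom{2p+1}{2k}\gamma_{2j+1}^*$. The final step is to recognize the remaining sum: as $k$ runs from $0$ to $p$ the binomial $\binom{2p+1}{2k}$ ranges over the even lower indices and $\gamma_{2j+1}^* = \gamma_{2(p-k)+1}^*$, so this is exactly the $p$-th defining relation (\ref{gamma_def}) for the numbers $\gamma^*$, whose value is $1$. Hence the right-hand side collapses to $\tfrac{1}{2^{2p+1}}\,R_{2m,2p+1}$, matching the left-hand side of (\ref{coeff_constraint}) identically. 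I expect the only real obstacle to be the index bookkeeping in applying (\ref{R_relation}) and in aligning the binomial sum with (\ref{gamma_def}); once those matchings are in place, the verification is automatic, and uniqueness of the solution to the triangular system completes the proof.
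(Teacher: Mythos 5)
Your proposal is correct and follows essentially the same route as the paper: substitute the candidate values into the right-hand side of Equation (\ref{coeff_constraint}), collapse $R_{2m,2j+1}\,R_{2m-1-2j,2k}$ to $R_{2m,2p+1}$ via (\ref{R_relation}), recognize the leftover binomial sum as the $p$-th defining relation (\ref{gamma_def}), and invoke uniqueness of the solution to the triangular system. The index bookkeeping you flag as the only potential obstacle checks out exactly as you describe.
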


\begin{proof}
	Let us write the right-hand side of (\ref{coeff_constraint}) with $a_{2m+1, 2m-2j}$ replaced by $(1/2^{2j+1}) \, \gamma_{2j+1}^* \, R_{2m, 2j+1}$ and simplify using $k+j = p$, (\ref{gamma_def}), and (\ref{R_relation}).  
\begin{gather*}
	\sum_{j=0}^p \frac{1}{2^{2k}} \, \binom{2p+1}{2k} \, R_{2m-1-2j,2k} \, \frac{1}{2^{2j+1}} \, \gamma_{2j+1}^* \, R_{2m, 2j+1} \\
	= \sum_{j=0}^p \frac{1}{2^{2p+1}} \, \binom{2p+1}{2p-2j} \,  \gamma_{2j+1}^* \, R_{2m, 2p+1} \\
	= \frac{1}{2^{2p+1}} \, R_{2m, 2p+1} \, \sum_{j=0}^p  \binom{2p+1}{2p-2j} \,  \gamma_{2j+1}^* \\
	= \frac{1}{2^{2p+1}} \, R_{2m, 2p+1}.
\end{gather*}
Since the \'Ubeda coefficients are uniquely determined by (\ref{coeff_constraint}), we have 
\[
	a_{2m+1,2m-2j} = \frac{1}{2^{2j+1}} \, \gamma_{2j+1}^* \, R_{2m,2j+1}
\]
which amounts to our desired result.
\end{proof}

\begin{example}
	Let us introduce the symbol $\fK_k(C)$ for $\sum_{A \marg_k C} \kappa_k(A)$.  Then consulting Table \ref{gamma_table}, we see that for any measure of concordance $\kappa$ we have, for instance,
\begin{gather*}
	\kappa_9(C) = - \frac{17}{8} \, R_{8,7} \, \fK_2(C) + \frac{1}{2} \, R_{8,5} \, \fK_4(C) \\
	- \frac{1}{4} \, R_{8,3} \, \fK_6(C) + \frac{1}{2} \, R_{8,1} \, \fK_8(C).
\end{gather*}
\end{example}

\section{An asymptotic result}

Suppose that we compute $\kappa_s(C_s)$ for the copula $C_s$ of the random vector $(X_1,\cdots,X_s)$ and then consider a new, enlarged random vector $(X_1,\cdots,X_s,X_{s+1},\cdots,X_{s+p})$ where each new random variable $X_{s+k}$ is a monotone increasing function of, let us say, $X_1$.  If $C_{s+p}$ is the copula of the new random vector, then one is tempted to suspect that $\kappa_{s+p}(C_{s+p}) \rightarrow 1$ as $p \rightarrow \infty$.  However, this is often not the case.  Here is a result to justify that statement:

\begin{theorem}
	Let $\{X_i\}_{i=1}^{\infty}$ be a sequence of continuous random variables such that each $X_i$ is almost surely a monotone increasing function of every other $X_j$.  Suppose that we are given $\{ \epsilon_i \}_{i=1}^{\infty}$ and $i_1 < i_2 < \cdots < i_s$ such that $\epsilon_i = -1$ if $i=$ some $i_k$ and otherwise $\epsilon_i = 1$.  Let $C_n$ be the copula of $(\epsilon_1 X_1, \ldots, \epsilon_n X_n)$.  If $\kappa$ is a measure of concordance such that $r_n \to r$, then $\kappa_n(C_n) \to (r-1)^s$ as $n \to \infty$.
\end{theorem}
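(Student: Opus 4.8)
The plan is to reduce the whole statement to a single application of the reflection-reduction formula (Theorem~\ref{ref_red}) to $M^n$, followed by an elementary limit. First I would observe that since each $X_i$ is almost surely an increasing function of every other, the copula of $(X_1,\ldots,X_n)$ is $M^n$, and replacing $X_{i_k}$ by $\epsilon_{i_k} X_{i_k} = -X_{i_k}$ corresponds (for continuous variables) to applying the elementary reflection $\sigma_{i_k}^*$. Hence, for every $n$, $C_n = \sigma_{j_1}^* \cdots \sigma_{j_t}^* M^n$, where $j_1 < \cdots < j_t$ are precisely those indices among $i_1,\ldots,i_s$ that are $\le n$. Since only the limit as $n\to\infty$ matters, I would restrict to $n \ge i_s$, where all $s$ reflections are present, so $t = s$. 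Because $M^n$ is invariant under every permutation, Permutation Invariance (A4) lets me move the reflected coordinates to the first $s$ positions, giving $\kappa_n(C_n) = \kappa_n(\sigma_s^* \cdots \sigma_1^* M^n)$ for all large $n$.

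Next I would apply Theorem~\ref{ref_red} with $C = M^n$ and $k = s$. The crucial simplification is that every marginal of $M^n$ is again a copula of the same type: deleting any set of $m$ coordinates from $M^n$ yields $M^{n-m}$, and by Normalization (A1) $\kappa_{n-m}(M^{n-m}) = 1$ whenever $n-m \ge 2$, which holds for all $m \le s$ once $n$ is large. Grouping the terms of (\ref{ref_red1}) by the number $m$ of deleted coordinates, and using that there are exactly $\binom{s}{m}$ marginals of $M^n$ obtained by deleting $m$ of the first $s$ coordinates, the entire right-hand side collapses to the finite sum
\[
	\kappa_n(C_n) = \sum_{m=0}^{s} (-1)^{s-m} \binom{s}{m} R_{n-1,m}.
\]

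Finally I would pass to the limit $n \to \infty$. For each fixed $m \le s$, $R_{n-1,m} = r_{n-1} r_{n-2} \cdots r_{n-m}$ is a product of exactly $m$ factors, each tending to $r$ by hypothesis, so $R_{n-1,m} \to r^m$. Taking the limit term by term (a finite sum) gives
\[
	\kappa_n(C_n) \longrightarrow \sum_{m=0}^{s} \binom{s}{m} r^m (-1)^{s-m} = (r-1)^s
\]
by the binomial theorem, which is the desired conclusion.

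The argument contains no genuine analytic obstacle: the limit is that of a finite sum of finite products of convergent factors, so no uniformity or continuity argument is needed. The only points demanding care are bookkeeping ones --- correctly identifying $C_n$ as a reflection of $M^n$ (and not of some other copula), verifying that every $m$-fold marginal of $M^n$ is exactly $M^{n-m}$ so that Normalization applies, and counting those marginals to produce the $\binom{s}{m}$ factors. Once these are secured, the reflection-reduction formula does all the structural work and the binomial identity closes the proof.
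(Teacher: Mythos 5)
Your proposal is correct and follows essentially the same route as the paper: identify $C_n$ with $\sigma_{i_1}^*\cdots\sigma_{i_s}^*M^n$, use Permutation Invariance to normalize the reflected coordinates, apply Theorem~\ref{ref_red} to $M^n$ with all marginals equal to $M^{n-m}$ and hence of concordance $1$, and finish with the binomial theorem. Your bookkeeping (restricting to $n\ge i_s$, counting the $\binom{s}{m}$ marginals, and noting the limit is of a finite sum) is sound and in fact slightly more explicit than the paper's.
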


\begin{proof}
	For $n>s$, the copula of $(\epsilon_1 X_1, \ldots, \epsilon_n X_n)$ is $C_n = \sigma_{i_1}^* \cdots \sigma_{i_s}^* M^n$.  By Permutation Invariance and Theorem \ref{ref_red}, we have 
\begin{align*}
	\kappa_n(C_n) &\quad = \quad \kappa_n(\sigma_1^* \cdots \sigma_s^* M^n) \\
	=& \quad r_{n-1} \cdots r_{n-s} - \binom{s}{s-1} \, r_{n-1} \cdots r_{n-s+1} + \binom{s}{s-2} \, r_{n-1} \cdots r_{n-s+2} \\ 
	-& \cdots + (-1)^s \quad \to  \quad r^s - \binom{s}{s-1} \, r^{s-1} + \binom{s}{s-2} \, r^{s-2} - \cdots + (-1)^s \\
	= & \quad (r-1)^s.  \qedhere
\end{align*} 
\end{proof}

For Spearman's rho, Gini's coefficient, Blomqvist's beta, and Kendall's tau, we see that $r_n \to 1$ and hence $\kappa_n(C_n) \to 0$.  However for the measure of concordance constructed in Theorem \ref{bivarext}, we have $r_n \to 2$ so that $\kappa_n(C_n) \to 1$.  Thus some multivariate measures of concordance are very sensitive to the presence of any amount of independence and others are not.

\section{Questions}

\begin{enumerate}

\item  Can we give interesting examples of measures of concordance of degree $m$ for every natural number $m$?

We say that a measure of concordance $\kappa$ is of \emph{degree $m$} provided the following hold:
\begin{enumerate}
\item  For every $n$ and for all $n$-copulas $A, B$, the map 
\[
	t \mapsto \kappa_n( (1-t) \, A + t \, B) 
\]
is a polynomial in $t$.
\item  $\sup_n \text{degree} \; \kappa_n( (1-t) \, A + t \, B) = m$.
\end{enumerate}
It is readily seen that Spearman's rho, Gini's coefficient, and Blomqvist's beta are all of degree one and that Kendall's tau is of degree two.  The measure of concordance in Theorem \ref{bivarext} will be of whatever degree the ``seed'' $\kappa_2$ is that is used in its construction.

\item  Can we characterize measures of concordance of degree one?  Of degree $m$?

Bivariate measures of concordance (in the sense of Scarsini) of degree one have been characterized in \cite{edwards_2008}.  It is possible that the constructions of this proof could be extended to the multivariate case, at least for the degree one case, and that degree one measures of concordance would have the form 
\[
	\kappa_n(C) = \alpha_n \, \left( \int_{I^n} (C + \sigma^*C) \, d\mu_n - \frac{1}{2^{n-1}} \right)
\]
for a suitably restricted class of probability measures $\mu_n$.

\item  Can we find the minimum values of measures of concordance of degree one?  Of degree $m$?

\item  Is there a systematic way to find sample versions of measures of concordance?  One suspects there might be a nice answer to this question for measures of concordance of degree one if we possessed a characterization of such measures of concordance.

\end{enumerate}

\noindent Department of Mathematics, University of Central Florida\\
\noindent Orlando, Fl 32816-1364, USA \\
\noindent mtaylor@pegasus.cc.ucf.edu

\end{document}